\def\notes{0}
\newtheorem{thm}{Theorem}
\newtheorem{lem}[thm]{Lemma}
\newtheorem{prop}[thm]{Proposition}
\newtheorem{mydef}[thm]{Definition}
\theoremstyle{definition}
\theoremstyle{definition}
\theoremstyle{definition}
\newcommand{\mynote}[1]{\marginpar{\tiny\sf {#1}}}
\newcommand{\mynote}[1]{}
\newlength{\dhatheight}
\newcommand{\doublehat}[1]{%
    \settoheight{\dhatheight}{\ensuremath{\hat{#1}}}%
    \addtolength{\dhatheight}{-0.2ex}%
    \hat{\vphantom{\rule{1pt}{\dhatheight}}%
    \smash{\hat{#1}}}}
\newcommand{\defeq}{\overset{\text{def}}{=}}
\begin{document}

\title{When is Nontrivial Estimation Possible\\ for Graphons and Stochastic Block Models?}
\author{Audra McMillan \and Adam Smith}
\maketitle

\begin{abstract}
Block graphons (also called stochastic block models) are an important and widely-studied class of models for random networks. 
We provide a lower bound on the accuracy of estimators for block graphons with a large number of blocks. We show that, given only the number $k$ of blocks and an upper bound $\rho$ on the values (connection probabilities) of the graphon, every estimator incurs error $\Omega\left(\min\left(\rho, \sqrt{\frac{\rho k^2}{n^2}}\right)\right)$ in the $\delta_2$ metric with constant probability for at least some graphons. In particular, our bound rules out any nontrivial estimation (that is, with $\delta_2$ error substantially less than $\rho$) when $k\geq n\sqrt{\rho}$. Combined with previous upper and lower bounds, our results characterize, up to logarithmic terms, the accuracy of graphon estimation in the $\delta_2$ metric.  A similar lower bound to ours was obtained independently by \citet*{Klopp:2016}.  
\end{abstract}

\section{Introduction}

Networks and graphs arise as natural modeling tools in many areas of science. In many settings, especially ones where edges in a network represent social ties, observed networks display some type of community structure, where the connectivity between nodes depends heavily on the communities they belong to. This type of structure is captured in the \emph{$k$-block graphon} model, also known as the stochastic block models. The more communities we allow in the model (or ``types" of nodes we consider), the richer the model becomes and the better we can hope to describe the real world. 

Given an observed network, graphon estimation is the problem of finding a graphon model that approximates the process that gave rise to the network. In this paper, we are concerned with the fundamental limits of graphon estimation for block graphons. That is, given a $n$-node network that was generated from a $k$-block graphon, how accurately can you recover the graphon? We consider the ``nonparametric'' setting, where $k$ may depend on $n$. Our lower bounds apply even to estimation algorithms that know the true number of blocks $k$ (though this quantity typically needs to be estimated).


Many real world networks display the property that the average degree of the network is small compared to the number of nodes in the network. Graphons whose expected average degree is linear in $n$ are called dense, while graphons whose expected average degree is sublinear in $n$ are referred to as sparse. In this work, we prove a new lower bound for graphon estimation for sparse networks. In particular, our results rule out \emph{nontrivial} estimation for very sparse networks (roughly, where $\rho = O(k^2/n^2)$). An estimator is nontrivial if its expected error is significantly better than an estimator which ignores the input and always outputs the same model. It follows from recent work \cite{Mossel:2014,Mossel:2015,Neeman:2014} that nontrivial estimation is impossible when $\rho=O(1/n)$.
Ours is the first lower bound that rules out nontrivial graphon estimation for large $k$. 
Previous work by \citet*{Klopp:2015} provides other lower bounds on graphon estimation that are tight in several regimes. In very recent work~\cite{Klopp:2016} that is concurrent to ours, the same authors provide a similar bound to the one presented here.


Block graphon models were introduced by \citet{HRH02} under the name latent position graphs. Graphons play an important role in the theory of graph limits (see \cite{Lovasz:2012} for a survey) and the connection between the graph model and convergent graph sequences has been studied in both the dense and sparse settings \cite{BCLSV06, BCLSV08, BCCZ14b, BCCZ14a, BCCZ14b}. Estimation for stochastic block models with a fixed number of blocks was introduced by \citet{BC09}, while the first estimation of the general model was proposed by \citet*{BCL11}. Many graphon estimation methods, with an array of assumptions on the graphon, have been proposed since; \cite{LOGR12, TSP13, LR13, WO13, CA14, ACC13, YangHA14, Gao:2014, ABH14, Chatterjee15, AS15known}. \citet{Gao:2014} provide the best known upper bounds in the dense setting while \citet{WO13,BorgsCS15,Klopp:2015} give upper bounds for the sparse case. 

\subsection{Graphons} 
\begin{mydef}[Bounded Graphons and $W$-random graphs]
A (bounded) \emph{graphon} $W$ is a symmetric, measurable function $W:[0,1]^2\to [0,1]$. Here, symmetric means that $W(x,y)=W(y,x)$ for all $(x,y)\in [0,1]^2$.

For any integer $n$, a graphon $W$ defines a distribution on graphs on $n$ vertices as follows: First, select $n$ labels $\ell_1, \cdots, \ell_n$ uniformly and independently from $[0,1]$, and form an $n\times n$ matrix $H$ where $H_{ij} = W(\ell_i, \ell_j)$. We obtain an unlabeled, undirected graph $G$ by connecting the $i$th and $j$th nodes with probability  $H_{ij}$ independently for each $(i,j)$. The resulting random variable is called a \emph{$W$-random graph}, and denoted $G_n(W)$. 

For $\rho \geq 0$, we say a graphon is $\rho$-bounded if $W$ takes values in $[0,\rho]$ (that is, $\|W\|_\infty \leq \rho$). 
\end{mydef}

We denote the set of graphs with $n$ nodes by $\mathcal{G}_n$, the set of graphons by $\mathcal{W}$ and the set of $\rho$-bounded graphons by $\mathcal{W}_\rho$. If $W$ is $\rho$-bounded, then the expected number of edges in $G_n(W)$ is at most  $\rho\binom n 2 = O(\rho n^2)$. In the case that $\rho$ is parametrised by $n$ and $\lim_{n\to\infty}\rho\to0$, we obtain a sparse graphon.

We consider the estimation problem: given parameters $n$ and $\rho$, as well as a graph $G\sim G_n(W)$ generated from an unknown $\rho$-bounded graphon $W$, how well can we estimate $W$?

A natural goal is to design estimators that produce a graphon $\hat W$ that is close to $W$ in a metric such as $L_2$. This is not possible, since there are many graphons that are far apart in $L_2$, but that generate the same probability distribution on graphs. If 
there exists a measure preserving map $\phi:[0,1]\to[0,1]$ such that $W(\phi(x), \phi(y))=W'(x,y)$ for all $x,y\in[0,1]$, then $G_n(W)$ and $G_n( W')$ are identically distributed. The converse is true if we instead only require $W(\phi(x), \phi(y))=W'(x,y)$ almost everywhere. Thus, we wish to say that $\hat W$ approaches the \emph{class} of graphons that generate $G_n(W)$. To this end, we use the following metric on the set of graphons, $$\delta_2(W,W') = \inf_{{\phi:[0,1]\to[0,1] \atop \text{measure-preserving}}} \|W_{\phi}-W'\|_2$$ where $W_{\phi}(x,y)=W(\phi(x),\phi(y))$ and $\phi$ ranges over all measurable, measure-preserving maps. Two graphons $W$ and $W'$ generate the same probability distribution on the set of graphs if and only if $\delta_2(W, W')=0$ (see, e.g. \cite{Lovasz:2012}).

Existing upper bounds for graphon estimation are based on algorithms that produce graphons of a particular form, namely \emph{block graphons}, also called \emph{stochastic block models} (even when it is not known that the true graphon is a block graphon).

\begin{mydef}[$k$-block graphon (stochastic block models)]
For $k\in\mathbb{N}$, a graphon is a \emph{$k$-block graphon} if there exists a partition of $[0,1]$ into $k$ measurable sets $I_1, \cdots, I_k$ such that $W$ is constant on $I_i\times I_j$ for all $i$ and $j$. 
\end{mydef}

Graphons of this form can be generated from $k\times k$ matrices. Given a $k\times k$ matrix $M$, we can assign a $k$-block graphon $W[M]$ with blocks $I_i=(\frac{i-1}{k}, \frac{i}{k}]$ that takes the value $M_{ij}$ on $I_i\times I_j$.  

\subsection{Main result} 

We are concerned with the problem of estimating a graphon, $W$, given a graph sampled from $G_n(W)$. A graphon estimator is a function $\hat{W}: \mathcal{G}_n \to \mathcal{W}$ that takes as input a $n$ node graph, that is generated according to $W$, and attempts to output a graphon that is close to $W$. The main contribution of this paper is the development of the lower bound $$\inf_{\hat{W}}\sup_{W}\underset{G\sim G_n(W)}{\mathbb{E}}[\delta_2(\hat{W}, W)]\ge\Omega\left(\min\left(\rho, \sqrt{\frac{\rho k^2}{n^2}}\right)\right).$$ Combined with previous work we can give the following lower bound on the error of graphon estimators. 

\begin{thm}\label{main} For any positive integer $2\le k\le n$ and $0< \rho\le 1$, 
$$\inf_{\hat{W}}\sup_{W}\underset{G\sim G_n(W)}{\mathbb{E}}\left[\delta_2(\hat{W}(G), W)\right]\ge \Omega\left(\min\left(\rho, \;\rho\sqrt[4]{\frac{k}{n}}+\sqrt{\frac{\rho k^2}{n^2}} + \sqrt{\frac{\rho \log (\min(k,\rho n+2))}{n}}\right)\right).$$
where $\inf_{\hat{W}}$ is the infimum over all estimators $\hat{W}:G_n\to\mathcal{G}$ and $\sup_W$ is the supremum over all $k$-block, $\rho$-bounded graphons.
\end{thm}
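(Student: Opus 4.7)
The bound in Theorem~\ref{main} is a maximum of three terms. Two of them already appear in the literature: the $\rho\sqrt[4]{k/n}$ term follows from the lower bounds of \citet{Klopp:2015} (rescaled for $\rho$-bounded graphons), and the $\sqrt{\rho\log\min(k,\rho n+2)/n}$ term follows from the community-detection impossibility results of \cite{Mossel:2014,Mossel:2015,Neeman:2014}. The plan below focuses on the new contribution, the bound $\Omega(\min(\rho,\sqrt{\rho k^2/n^2}))$, which is the dominant term in the many-blocks regime and is what rules out nontrivial estimation for $k \gtrsim n\sqrt{\rho}$.

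The approach is a Fano-style packing argument. Fix $\epsilon = c\cdot\min(\rho,\sqrt{\rho k^2/n^2})$ for a small constant $c>0$, and for each symmetric $s\in\{-1,+1\}^{k\times k}$ set $M_s = (\rho/2)J + \epsilon s$, where $J$ is the all-ones matrix; then $W_s := W[M_s]$ is a $\rho$-bounded $k$-block graphon. The plan is to sample $N = 2^{c'k^2}$ such matrices independently and uniformly and argue that, with positive probability over the draw, the resulting family $\{W_s\}$ is pairwise $\Omega(\epsilon)$-separated in $\delta_2$ and has pairwise KL divergence $O(k^2)$. Fano's inequality will then force any estimator to incur $\delta_2$-error $\Omega(\epsilon)$ on at least one member of the family.

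The main obstacle is the $\delta_2$ separation, since $\delta_2$ is an infimum over all measure-preserving maps $\phi:[0,1]\to[0,1]$, not merely over permutations of the $k$ blocks. For any such $\phi$, setting $A_i = \phi^{-1}(I_i)$ and $\mu_{ij} = |A_i\cap I_j|$ produces a doubly stochastic matrix $P := k\mu$, and a short calculation using $(s_{ii'}-s'_{jj'})^2 = 2-2s_{ii'}s'_{jj'}$ gives
\[
\|W_{s,\phi} - W_{s'}\|_2^2 \;=\; 2\epsilon^2\Bigl(1 - \tfrac{1}{k^2}\langle s,\, Ps'P^\top\rangle\Bigr).
\]
Birkhoff's theorem writes $P = \sum_\pi \lambda_\pi P_\pi$ as a convex combination of permutation matrices, so the triangle inequality gives $|\langle s, Ps'P^\top\rangle| \leq \max_{\pi,\pi'}|\langle s, P_\pi s' P_{\pi'}^\top\rangle|$. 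Each such term is a bilinear form in independent Rademacher entries with coefficient Frobenius-norm $O(k)$, and hence sub-Gaussian tails of scale $O(k)$. A Hoeffding/Hanson--Wright bound combined with a union bound over the $(k!)^2 = e^{O(k\log k)}$ permutation pairs and the $\binom{N}{2}$ graphon pairs then yields $|\langle s, Ps'P^\top\rangle|\leq k^2/4$ uniformly, provided $c'$ is taken small (so that $\log N + k\log k \ll k^2$). This gives $\delta_2(W_s,W_{s'}) = \Omega(\epsilon)$ for every pair in the packing.

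The KL bound is more routine. Conditioning on the latent labels $\ell_1,\ldots,\ell_n$, independence of the edges makes $\mathrm{KL}(G_n(W_s)\|G_n(W_{s'}))$ a sum of $\binom{n}{2}$ per-edge Bernoulli KL terms, each bounded by $O(\epsilon^2/\rho)$ (since for $c$ small the entries of $M_s, M_{s'}$ lie in $[\rho/4,3\rho/4]$ and differ by at most $2\epsilon$). Taking expectation over the uniform labels bounds the expected pairwise KL by $O(n^2\epsilon^2/\rho) = O(c^2 k^2)$, which is below $(\log N)/4$ for small $c$. Fano's inequality then delivers the minimax lower bound $\Omega(\epsilon) = \Omega(\min(\rho, \sqrt{\rho k^2/n^2}))$, completing the proof.
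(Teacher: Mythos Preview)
Your proposal is correct and follows essentially the same route as the paper: a Fano argument with a random packing of $\pm 1$-perturbed block graphons, where the crucial $\delta_2$ separation is obtained by passing from a measure-preserving map to its doubly-stochastic ``overlap'' matrix $P$, invoking Birkhoff to reduce to pairs of permutations, and then beating the $(k!)^2$ union bound with a Hoeffding/Chernoff estimate for random sign matrices. The only difference is packaging: the paper isolates the Birkhoff step as the general inequality $\doublehat{\delta}_2(A,B)\le \delta_2(W[A],W[B])$ (via the identity $\|W_\pi-W'\|_2^2=\mathbb{E}_{\sigma,\tau}\|W_{\sigma,\tau}-W'\|_2^2$) and phrases the random construction in terms of Hamming distance, whereas you work directly with the inner-product identity $\|W_{s,\phi}-W_{s'}\|_2^2 = 2\epsilon^2\bigl(1-\tfrac{1}{k^2}\langle s,Ps'P^\top\rangle\bigr)$; these are equivalent computations.
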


Note that $k$ and $\rho$ may depend on $n$. That is, the theorem holds if we consider sequences $\rho_n$ and $k_n$. Our result improves on previously known results when $\rho=O\left(\left(\frac{k}{n}\right)^{3/2}\right)$ and $n<\frac{k^2}{\log k}$, that is, when the graphs produced by the graphon are sparse and $k$ is large. The upper bound 
\begin{equation}\label{Kloppupper}
\inf_{\hat{W}}\sup_{W}\underset{G\sim G_n(W)}{\mathbb{E}}\left[\delta_2(\hat{W}(G), W)\right]\le O\left(\min\left(\rho, \;\rho\sqrt[4]{\frac{k}{n}}+\sqrt{\frac{\rho k^2}{n^2}}+\sqrt{\frac{\rho \log k}{n}}\right)\right)
\end{equation} 
by Klopp et al. \cite{Klopp:2015} implies that our lower bound is almost tight. In particular, if $k$ is constant then the lower bound in Theorem \ref{main} is tight. In all cases, it is within a factor of at most $\max(\frac{\log k}{\log(\rho n+2)},1)$ of the correct bound.

When $\rho=O\left(\frac{k^2}{n^2}\right)$, Theorem \ref{main} implies that the error is $\Omega(\rho)$, which is the error achieved by the trivial estimator $\hat{W}=0$. That is, in the sparse setting, the trivial estimator achieves the optimal error. To the authors' knowledge this is the first result that completely rules out nontrivial estimation in the case where $k$ is large. Recent concurrent work (\citet{Klopp:2016}) provides similar bounds.

The bound 
\begin{equation}\label{Klopplower}
\inf_{\hat{W}}\sup_{W}\underset{G\sim G_n(W)}{\mathbb{E}}[\delta_2(\hat{W}, W)]\ge \Omega\left(\rho\sqrt[4]{\frac{k}{n}}\right)
\end{equation} is due to previous work of Klopp et al. \cite{Klopp:2015} and the bound 
\begin{equation}\label{Neemanlower}
\inf_{\hat{W}}\sup_{W}\underset{G\sim G_n(W)}{\mathbb{E}}[\delta_2(\hat{W}, W)]\ge\Omega\left( \min\left(\rho, \sqrt{\frac{\rho \log (\min(k,\rho n+2))}{n}}\right)\right)
\end{equation} follows from a result of Neeman and Netrapalli \cite{Neeman:2014}. We give details on how to derive \eqref{Neemanlower} from their results in Section \ref{extra}.  

\subsection{Techniques: Combinatorial Lower Bounds for $\delta_p$} 
Our proof of the main theorem will involve Fano's lemma. As such, during the course of the proof we will need to lower bound the packing number, with respect to $\delta_2$, of a large set of $k$-block graphons. Whilst easily upper bounded, little is known about lower bounds on $\delta_2$. To the authors' knowledge, this work gives the first lower bound for the packing number of $\mathcal{W}_\rho$ with respect to $\delta_2$. We will also give a combinatorial lower bound for the $\delta_2$ metric that is easier to handle than the metric itself.


To understand our technical contributions, it helps to first understand a problem related to graphon estimation, namely that of estimating the matrix of probabilities $H$. Existing algorithms for graphon estimation are generally analyzed in two phases: first, one shows that the estimator $\hat W$ is close to the matrix $H$ (in an appropriate version of the $\delta_2$ metric), and then uses (high probability) bounds on $\delta_2(W,W[H])$ to conclude that $\hat{W}$ is close to $W$. \citet{Klopp:2015} show tight upper and lower bounds on estimation of $H$. One can think of our lower bound as showing that the lower bounds on estimation of $H$ can be transferred to the problem of estimating $W$. 

The main technical difficulty lies in showing that a given pair of matrices $A,B$ lead to graphons that are far apart in the $\delta_2$ metric.  Even if $A,B$ are far apart in, say, $\ell_2$, they may lead to graphons that are close in $\delta_2$. For consistency with the graphon formalism, we normalize the $\ell_2$ metric on $k\times k$ matrices so that it agrees with the $L_2$ metric on the corresponding graphons. For a $k\times k$ matrix $A$,

$$\|A\|_2 \defeq  \Big(\frac 1 {k^2} \sum_{i,j\in [k]} A_{ij}^2\Big)^{1/2} = \|W[A]\|_2.$$
As an example of the discrepancy between the $\ell_2$ and $\delta_2$ metrics, consider the matrices $$A = \begin{pmatrix} 1 & 0 & 1\\ 0&1&0\\1&0&1\end{pmatrix} \hspace{0.5in}\text{and}\hspace{0.5in} B= \begin{pmatrix} 1 & 1 & 0\\ 1&1&0\\0&0&1\end{pmatrix}.$$ The matrices $A$ and $B$ have positive distance in the $\ell_2$ metric, $\|A-B\|_2 = \frac{2}{3}$, but $\delta_2(W[A], W[B])=0$.

One can get an \emph{upper bound} on $\delta_2(W[A],W[B])$ by restricting attention in the definition of $\delta_2$ to functions $\phi$ that permute the blocks $I_i$. This leads to the following metric on $k\times k$ matrices which minimizes over permutations of the rows and columns of one of the matrices:
$$\hat{\delta}_2(A,B) \defeq \min_{\sigma\in\mathcal{S}_k}\|A_{\sigma}-B\|_2\,,$$ 
where $A_\sigma$ is the matrix with entries $(A_\sigma)_{ij} = A_{\sigma(i),\sigma(j)}$. This metric arises in other work (e.g. \cite{Lovasz:2012}), and it is well known that 
$$\delta_2(W[A],W[B]) \leq \hat{\delta}_2(A,B).$$ 

To prove lower bounds, we consider a new metric on matrices, in which we allow the rows and columns to be permuted separately. Specifically, let 
$$\doublehat{{\delta}}_2(A,B) \defeq \min_{\sigma, \tau\in\mathcal{S}_k}\|A_{\sigma, \tau}-B\|_2\, ,$$
where $A_{\sigma,\tau}$ is the $k\times k$ matrix with entries $(A_{\sigma,\tau})_{ij} = A_{\sigma(i),\tau(j)}$. 

\begin{lem}[Lower bound for $\delta_2$]\label{lowerbounddelta2}
  For every two $k\times k$ matrices $A,B$, 
  $$\doublehat\delta_2(A,B) \leq \delta_2(W[A], W[B]).$$ 
\end{lem}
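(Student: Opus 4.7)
The plan is to show, for every measure-preserving $\phi:[0,1]\to[0,1]$, that $\|W[A]_\phi - W[B]\|_2 \geq \doublehat\delta_2(A,B)$, and then take the infimum over $\phi$. The key device is to encode the ``block-level'' action of $\phi$ as a doubly stochastic $k\times k$ matrix, and then rewrite the $L_2$ squared-norm as a quadratic form in that matrix.

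First, define $q_{ai} = k\cdot|\phi^{-1}(I_i)\cap I_a|$. Because $\phi$ is measure-preserving (but not necessarily a bijection), one checks that $\sum_i q_{ai} = k|I_a| = 1$ and $\sum_a q_{ai} = k|\phi^{-1}(I_i)| = 1$, so $q$ is doubly stochastic. Splitting the domain of integration as a disjoint union of $I_a\times I_b$ and further partitioning each piece by which block pair $(I_i,I_j)$ contains $(\phi(x),\phi(y))$, the integrand $(W[A](\phi(x),\phi(y))-W[B](x,y))^2$ is constantly equal to $(A_{ij}-B_{ab})^2$ on a set of measure $(q_{ai}/k)(q_{bj}/k)$. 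Summing gives
\begin{equation*}
  k^2 \|W[A]_\phi - W[B]\|_2^2 \;=\; \sum_{a,b,i,j} q_{ai}\,q_{bj}\,(A_{ij}-B_{ab})^2.
\end{equation*}

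Next I would apply the Birkhoff-von Neumann theorem to decompose $q = \sum_\pi \lambda_\pi P_\pi$ as a convex combination of permutation matrices, with $\lambda_\pi\ge 0$ and $\sum_\pi\lambda_\pi = 1$. Substituting $q_{ai}=\sum_{\pi_1}\lambda_{\pi_1}\mathbb{1}[\pi_1(a)=i]$ and $q_{bj}=\sum_{\pi_2}\lambda_{\pi_2}\mathbb{1}[\pi_2(b)=j]$ and expanding yields
\begin{equation*}
  \sum_{a,b,i,j} q_{ai}\,q_{bj}\,(A_{ij}-B_{ab})^2
  \;=\; \sum_{\pi_1,\pi_2} \lambda_{\pi_1}\lambda_{\pi_2}\sum_{a,b}(A_{\pi_1(a),\pi_2(b)}-B_{ab})^2,
\end{equation*}
which is a convex combination (the weights $\lambda_{\pi_1}\lambda_{\pi_2}$ are nonnegative and sum to $1$) of the quantities $k^2\|A_{\pi_1,\pi_2}-B\|_2^2$. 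Hence the expression is at least the minimum over all pairs $(\sigma,\tau)$, which by definition is $k^2\doublehat\delta_2(A,B)^2$. Combining gives $\|W[A]_\phi - W[B]\|_2 \geq \doublehat\delta_2(A,B)$, and taking the infimum over $\phi$ completes the proof.

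There is no serious obstacle; the conceptual move is noticing that the $L_2$ squared-distance between block graphons under a measure-preserving map depends on $\phi$ only through the doubly stochastic matrix $q$ of block-mass transport, so the optimization over the infinite-dimensional space of measure-preserving maps collapses to a quadratic optimization over the Birkhoff polytope. The only thing to be careful about is that $q$ is genuinely doubly stochastic without assuming $\phi$ is a bijection, and that the expansion in Step~2 produces a convex combination of the \emph{pair-of-permutations} distances (rather than just a single permutation), which is what makes $\doublehat\delta_2$ rather than $\hat\delta_2$ appear as the lower bound.
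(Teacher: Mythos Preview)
Your proof is correct and follows essentially the same route as the paper: encode the block-level action of $\phi$ as a doubly stochastic matrix, apply Birkhoff--von~Neumann, and observe that $\|W[A]_\phi-W[B]\|_2^2$ becomes a convex combination (the paper phrases it as an expectation over $\sigma,\tau\sim\mathbb{P}$) of the quantities $\|A_{\sigma,\tau}-B\|_2^2$, hence is bounded below by the minimum. One small point in your favor: you verify the doubly stochastic property for arbitrary measure-preserving maps, whereas the paper's intermediate proposition is stated only for bijections even though the infimum defining $\delta_2$ ranges over all measure-preserving maps.
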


Because $\doublehat \delta_2$ is defined ``combinatorially'' (that is, it involves minimization over a discrete set of size about $2^{2k\ln k}$, instead of over all measure-preserving injections), it is fairly easy to lower bound $\doublehat \delta_2(A,B)$ for random matrices $A,B$ using the union bound.

In particular, it allows us to give bounds on the packing number of $\mathcal{W}_\rho$ with respect to the $\delta_2$ metric. If $(\Omega, d)$ is a metric space, $\epsilon>0$ and $T\subset\Omega$, then we define the $\epsilon$-packing number of $T$ to be the largest number of disjoint balls of radius $\epsilon$ that can fit in $T$, denoted by $\mathcal{M}(\epsilon, T, d)$. The following Proposition will be proved after the proof of Theorem \ref{main}.

\begin{prop}\label{covering}
There exists $C>0$ such that the $C\rho$-packing number of $\mathcal{W}_{\rho}$, equipped with $\delta_2$, is $2^{\Omega(k^2)}$, that is $\mathcal{M}(C\rho, \mathcal{W}_{\rho}, \delta_2) = 2^{\Omega(k^2)}$.
\end{prop}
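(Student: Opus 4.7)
The plan is to use the probabilistic method to produce $M = 2^{\Omega(k^2)}$ symmetric $k\times k$ matrices with entries in $\{0,\rho\}$ whose induced block graphons are pairwise $\Omega(\rho)$-separated in $\doublehat{\delta}_2$, and then invoke Lemma~\ref{lowerbounddelta2} to transfer the separation to $\delta_2$. Concretely, I would sample $A^{(1)},\dots,A^{(M)}$ by drawing the upper-triangular entries (including the diagonal) of each $A^{(\ell)}$ independently and uniformly from $\{0,\rho\}$ and reflecting across the diagonal; each $W[A^{(\ell)}]$ then lies in $\mathcal{W}_\rho$, and it suffices to show that with positive probability every pair $(A,B)$ from the sample satisfies $\doublehat{\delta}_2(A,B)\ge \rho/2$.

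Fix two independent samples $A,B$ and a pair of permutations $\sigma,\tau\in\mathcal{S}_k$. Writing $\|A_{\sigma,\tau}-B\|_2^2 = (\rho^2/k^2)\,N_{\sigma,\tau}$ where $N_{\sigma,\tau}=\#\{(i,j)\in[k]^2:A_{\sigma(i),\tau(j)}\neq B_{ij}\}$, independence of $A$ and $B$ gives $\mathbb{E}[N_{\sigma,\tau}]=k^2/2$. The next step is to apply McDiarmid's bounded-differences inequality, viewing $N_{\sigma,\tau}$ as a function of the $O(k^2)$ independent upper-triangular entries of $A$ and $B$. Toggling a single entry $A_{uv}$ only alters the summands $(i,j)$ with $(\sigma(i),\tau(j))\in\{(u,v),(v,u)\}$, of which there are at most two; the same holds for $B$. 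So all bounded-differences constants are $O(1)$ and McDiarmid yields $\Pr[N_{\sigma,\tau}\le k^2/4]\le e^{-\Omega(k^2)}$, i.e., $\|A_{\sigma,\tau}-B\|_2\ge\rho/2$ with that probability.

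Then I would take a union bound over the $(k!)^2 = 2^{O(k\log k)}$ pairs $(\sigma,\tau)$; since $k\log k = o(k^2)$, this preserves an $e^{-\Omega(k^2)}$ failure bound for a single pair $(A,B)$, giving $\Pr[\doublehat{\delta}_2(A,B)<\rho/2]\le e^{-\Omega(k^2)}$. A second union bound over the $\binom{M}{2}$ pairs of sampled matrices shows that choosing $M=2^{ck^2}$ for a sufficiently small constant $c>0$ still leaves positive probability that every pair from the family is $\doublehat{\delta}_2$-separated by $\rho/2$. Combined with Lemma~\ref{lowerbounddelta2}, this exhibits $2^{\Omega(k^2)}$ graphons in $\mathcal{W}_\rho$ with pairwise $\delta_2$-distance at least $\rho/2$, which is a $(\rho/4)$-packing in the sense of the proposition (take $C=1/4$).

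The main obstacle I foresee is the careful accounting of how many summands in $N_{\sigma,\tau}$ can be affected by modifying a single upper-triangular (or diagonal) entry, given that symmetry of $A$ and $B$ introduces mild correlations among the indicators and that diagonal and off-diagonal entries must be handled separately. Once that bookkeeping fixes the McDiarmid constants at $O(1)$, the concentration is immediate and the two union bounds collapse because both $(k!)^2$ and the sample size are of the form $2^{o(k^2)}\cdot 2^{O(k^2)}$, with the permutation count strictly dominated by the concentration exponent.
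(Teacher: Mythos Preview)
Your proposal is correct and follows essentially the same route as the paper (Lemma~\ref{largeset} combined with Lemma~\ref{lowerbounddelta2}): random symmetric binary $k\times k$ matrices, concentration for the permuted Hamming distance, then union bounds over the $(k!)^2$ permutations and the $\binom{M}{2}$ pairs with $M=2^{ck^2}$. The one simplification in the paper that dissolves your ``main obstacle'' is that McDiarmid is unnecessary: for fixed $\sigma,\tau$ and conditioned on $A$, the upper-triangular mismatch indicators $\mathbf{1}[A_{\sigma(i),\tau(j)}\neq B_{ij}]$ for $i\le j$ are i.i.d.\ Bernoulli$(1/2)$ (since the $B_{ij}$ on the upper triangle are independent uniform bits), so a plain Hoeffding/Chernoff bound gives the $e^{-\Omega(k^2)}$ tail directly with no bounded-differences bookkeeping.
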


Finally, we note that these techniques extend directly to the $\delta_p$ metric, for $p \in [1,\infty]$. That is, we may define $\delta_p, \hat\delta_p$ and $\doublehat\delta_p$ analogously to the definitions above, and obtain the bounds
$$\doublehat\delta_p(A,B) \leq \delta_p(W[A], W[B])\leq \hat\delta_p(A,B),$$ along with similar lower bounds on the packing number.



\hfill \subsection{Related work} \hfill

Work on graphon estimation falls broadly into two categories; estimating the matrix $H$ and estimating the graphon $W$. When estimating $H$, the aim is to produce a matrix that is close in the $\ell_2$ metric to the true matrix of probabilities $H$ that was used to generate the graph $G$. When estimating the graphon, our aim is the minimise the $\delta_2$ distance between the estimate and the true underlying graphon $W$ that was used to generate $G$.

Gao et al. studied the problem of estimating the matrix of probabilities $H$ given an instance chosen from $W$ when $\rho=1$. They proved the following minimax rate for this problem when $W$ is a $k$-block graphon; $$\inf_{\hat{M}}\sup_{H}\underset{G\sim G_n(H)}{\mathbb{E}}\left[\frac{1}{n^2}\|\hat{M}(G)-H\|_2\right]\asymp \sqrt{\frac{k^2}{n^2}+\frac{\log k}{n}}$$ where the infinimum is over all estimators $\hat{M}$ from $G_n$ to the set of symmetric $n\times n$ matrices, the supremum is over all probability matrices $H$ generated from $k$-block graphons. Klopp et al. extended this result to the sparse case, proving that for all $k\le n$ and $0<\rho\le 1$, $$\inf_{\hat{M}}\sup_{H}\underset{G\sim G_N(H)}{\mathbb{E}}\left[\frac{1}{n^2}\|\hat{M}(G)-H\|_2\right]\ge\Omega\left(\min\left(\sqrt{\rho\left(\frac{k^2}{n^2}+\frac{\log k}{n}\right)}, \rho\right)\right)$$ where the supremum is over all probability matrices $H$ generated from $k$-block, $\rho$-bounded graphons. 

\citet[Corollary 3]{Klopp:2015} also studied the problem of estimating the graphon $W$. They proved that Equation (\ref{Kloppupper}) holds for any $k$-block, $\rho$-bounded graphon, $W$, with $k\le n$. They also exhibited the first lower bound (known to us) for graphon estimation using the $\delta_2$ metric. They proved that Equation (\ref{Klopplower}) holds for $\rho>0$ and $k\le n$.

The related problems of distinguishing a graphon with $k> 1$ from an Erd\"os-R\'enyi model with the same average degree (called the distinguishability problem) and reconstructing the communities of a given network (called the reconstruction problem) have also been widely studied. This problem is closely related to the problem of estimating $H$. Recent work by \citet{Mossel:2014} and \citet{Neeman:2014} establish conditions under which a $k$-block graphon is mutually contiguous to the Erd\"os-R\'enyi model with the same average degree. Contiguity is essentially a condition that implies that no test could ever definitely determine which of the two graphons a given sample came from. There is a large body of work on algorithmic and statistical problems in this area and we have only cited work that is directly relevant here. 


\section{Lower Bound for the $\delta_2$ Metric}

As mentioned earlier, the main technical contribution of this paper is lower bounding the $\delta_2$ metric by the more combinatorial $\doublehat{\delta_2}$ metric. In this section we will prove the inequality given in Lemma \ref{lowerbounddelta2}.

\begin{prop}\label{expectation}
Let $W, W'$ be $k$-block graphons with blocks $I_i=[\frac{i-1}{k}, \frac{i}{k})$ and $\pi:[0,1]\to[0,1]$ be a measure-preserving bijection. Then there exists a probability distribution $\mathbb{P}$ on $\mathcal{S}_k$ such that $$\|W_{\pi}-W'\|_2^2 = \underset{\sigma, \tau\sim\mathbb{P}}{\mathbb{E}}[\|W_{\sigma, \tau}-W'\|_2^2].$$
\end{prop}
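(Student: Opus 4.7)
The plan is to reduce the continuous problem to a combinatorial one by exploiting the Birkhoff--von Neumann theorem. The key observation is that the ``overlap matrix'' of $\pi$ with respect to the two block partitions is doubly stochastic, so it lies in the convex hull of the permutation matrices, and these permutations will give the desired distribution.

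Concretely, let $W = W[M]$ and $W' = W[M']$ for $k\times k$ matrices $M, M'$. For $a, i \in [k]$, set $p_{ai} = \mu(I_a \cap \pi^{-1}(I_i))$ and $q_{ai} = k\, p_{ai}$. Because $\pi$ is measure-preserving, the sets $\{I_a \cap \pi^{-1}(I_i)\}_{i}$ partition $I_a$ (giving $\sum_i q_{ai} = 1$) and the sets $\{I_a \cap \pi^{-1}(I_i)\}_{a}$ partition $\pi^{-1}(I_i)$, which has measure $1/k$ (giving $\sum_a q_{ai} = 1$). Thus $Q = (q_{ai})$ is doubly stochastic, so by Birkhoff--von Neumann $Q = \sum_{\sigma \in \mathcal{S}_k} \lambda_\sigma P_\sigma$ for some weights $\lambda_\sigma \geq 0$ summing to $1$, where $(P_\sigma)_{ai} = [\sigma(a)=i]$. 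Define $\mathbb{P}$ on $\mathcal{S}_k$ by $\mathbb{P}(\sigma) = \lambda_\sigma$, and let $\sigma, \tau$ be drawn independently from $\mathbb{P}$.

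Next I would expand both sides. Partitioning $[0,1]^2$ by the quadruple of blocks containing $x$, $\pi(x)$, $y$, $\pi(y)$, and using Fubini (the partition is a product of partitions in $x$ and $y$), the left side becomes
\[
\|W_\pi - W'\|_2^2 \;=\; \sum_{a,b,i,j \in [k]} p_{ai}\, p_{bj}\, (M_{ij} - M'_{ab})^2.
\]
For the right side, since $W_{\sigma,\tau}$ takes value $M_{\sigma(a),\tau(b)}$ on $I_a \times I_b$,
\[
\mathbb{E}[\|W_{\sigma,\tau} - W'\|_2^2] \;=\; \frac{1}{k^2}\sum_{a,b} \mathbb{E}\bigl[(M_{\sigma(a),\tau(b)} - M'_{ab})^2\bigr] \;=\; \frac{1}{k^2}\sum_{a,b,i,j} q_{ai}\, q_{bj}\, (M_{ij} - M'_{ab})^2,
\]
where the second equality uses independence of $\sigma$ and $\tau$ together with the Birkhoff decomposition, which yields $\Pr[\sigma(a)=i] = q_{ai}$ and $\Pr[\tau(b)=j] = q_{bj}$. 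Since $q_{ai} q_{bj}/k^2 = p_{ai} p_{bj}$, the two expressions agree.

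The only step requiring real work is identifying the doubly stochastic matrix and invoking Birkhoff--von Neumann; everything else is a bookkeeping exercise. The main subtlety to watch out for is that $\sigma$ and $\tau$ act on the \emph{rows} and \emph{columns} independently, which matches exactly the product structure of the integration over $x$ and $y$ on the left side---this is what makes it possible to use a product distribution of two independent copies from $\mathbb{P}$ rather than needing joint control.
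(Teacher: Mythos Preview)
Your proof is correct and follows essentially the same route as the paper: form the doubly stochastic overlap matrix $q_{ai}=k\,\mu(I_a\cap\pi^{-1}(I_i))$, apply Birkhoff--von Neumann to obtain a distribution $\mathbb{P}$ on $\mathcal{S}_k$ with marginals $\Pr[\sigma(a)=i]=q_{ai}$, and then expand both sides as the same sum $\sum_{a,b,i,j}p_{ai}p_{bj}(M_{ij}-M'_{ab})^2$. Your write-up is in fact slightly more explicit than the paper's about the partition-and-Fubini step for the left-hand side, but the argument is the same.
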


\begin{proof}
Let $a_i = \frac{i-1}{k}$ and $p_{ij} = \mu(I_i\cap \pi^{-1}(I_j))$. Now, consider a $k\times k$ matrix $P$ with $P_{ij} = kp_{ij}$. Noting that $\sum_{j=1}^k p_{ij} = \mu(I_i) = 1/k$ and $\sum_{i=1}^k p_{ij} = \mu(\pi^{-1}(I_j)) = 1/k$, we can see that $P$ is doubly stochastic, that is, the rows and columns of $P$ sum to 1. Berkhoff's theorem states that any doubly stochastic matrix can be written as a convex combination of permutation matrices. So  $P=\sum_{\sigma\in\mathcal{S}_k}\mathbb{P}(\sigma)\sigma$ where $\sum_{\sigma\in\mathcal{S}_k}\mathbb{P}(\sigma)=1$. Therefore, we have a probability distribution $\mathbb{P}$ on $\mathcal{S}_k$ and $$\mathbb{P}(\sigma(i)=j) = \sum\{\mathbb{P}(\sigma) \:|\: \sigma(i)=j\} = P_{ij} = kp_{ij}.$$ 

Now, 
\begin{align*}
\mathbb{E}[\|W_{\sigma, \tau}-W'\|_2^2]&=\sum_{\sigma,\tau} \mathbb{P}(\sigma)\mathbb{P}(\tau)\sum_{i,j} \frac{1}{k^2}(W(a_{\sigma(i)}, a_{\tau(j)})-W'(a_i,a_j))^2 \\
&= \sum_{i,i',j,j'} \frac{1}{k^2}\mathbb{P}(\sigma(i)=i')\mathbb{P}(\tau(j)=j')(W(a_i, a_j)-W'(a_{i'}, a_{j'}))^2 \\
&= \sum_{i,i',j,j'} p_{ii'}p_{jj'}(W(a_i, a_j)-W'(a_{i'},a_{j'}))^2\\
&= \|W_{\pi}-W'\|_2^2.\qedhere
\end{align*}
\end{proof}

\begin{proof}[Proof of Lemma \ref{lowerbounddelta2}]\label{deltahathat}
Proposition \ref{expectation} implies that for all measure preserving bijections $\pi:[0,1]\to[0,1]$ and matrices $A$ and $B$ we have $$\|W[A]_{\pi}-W[B]\|_2\ge\inf_{\sigma, \tau\in\mathcal{S}_k}\|W[A]_{\sigma, \tau}-W[B]\|_2=\inf_{\sigma, \tau\in\mathcal{S}_k}\|A_{\sigma, \tau}-B\|_2 = \doublehat{\delta_2}(A, B).$$ Since this is true for any $\pi$, we have $\delta_2(W[A],W[B])\ge\doublehat{\delta_2}(A,B)$. 
\end{proof}

\section{Proof of Main Theorem}

To prove the main theorem we will use Fano's lemma to find a constant that lower bounds the probability that the estimation exceeds $\min\left(\rho, \sqrt{\frac{\rho k^2}{n^2}}\right)$, which then implies the appropriate lower bound on the expected $\delta_2$ error. To that end, we aim to find a large set, $T$, of $k$-block graphons whose KL-diameter and $\epsilon$-packing number with respect to $\delta_2$ with $\epsilon=\min\left(\rho, \sqrt{\frac{\rho k^2}{n^2}}\right)$ can be bounded. Our proof is inspired by that of Gao et al. 

Suppose $p, q$ are probability distributions on the same space. Then the Kullback-Leibler (KL) divergence of $p$ and $q$ is defined by $D(p\|q) = \int(\log\frac{dp}{dq})dp$. For a collection $T$ of  probability distributions, the KL diameter is defined by $$d_{KL}(T)=\sup_{p,q\in T} D(p\| q).$$ The following version of Fano's lemma is found in \cite{Yu:1997}.

\begin{lem}[Fano's Inequality.]
Let $(\Omega, d)$ be a metric space and $\{\mathbb{P}_{\theta} \: |\: \theta\in\Omega\}$ be a collection of probability measures. For any totally bounded $T\subset\Omega$ and $\epsilon>0$, $$\inf_{\hat{\theta}}\sup_{\theta\in\Omega}\mathbb{P}_{\theta}\left(d^2(\hat{\theta}(X), \theta)\ge \frac{\epsilon^2}{4}\right)\ge 1-\frac{d_{KL}(T)+1}{\log\mathcal{M}(\epsilon, T, d)}$$ 
where the infimum is over all estimators.
\end{lem}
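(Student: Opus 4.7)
The plan is to derive this metric-space minimax version of Fano's inequality from the classical information-theoretic Fano inequality via a packing-plus-hypothesis-testing reduction, in the style of Yu 1997. First, I would choose a maximal $\epsilon$-separated subset $\{\theta_1, \ldots, \theta_N\} \subseteq T$ with $N = \mathcal{M}(\epsilon, T, d)$, so that $d(\theta_i, \theta_j) > \epsilon$ for all $i \neq j$. Given any estimator $\hat\theta$, attach the hypothesis test $\Psi(X) = \operatorname{argmin}_i d(\hat\theta(X), \theta_i)$ (breaking ties arbitrarily). The triangle inequality combined with the separation guarantee implies that if $d(\hat\theta(X), \theta_i) < \epsilon/2$ then $\Psi(X) = i$; equivalently, the event $\{\Psi(X) \neq i\}$ is contained in $\{d^2(\hat\theta(X), \theta_i) \geq \epsilon^2/4\}$. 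This inclusion is the source of the factor of $4$ in the statement.

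Next, reduce the worst-case probability to a Bayes-average probability. Placing a uniform prior $\Theta \sim \mathrm{Unif}\{\theta_1, \ldots, \theta_N\}$ and letting $X \mid \Theta = \theta_i \sim \mathbb{P}_{\theta_i}$, the supremum over $\Omega$ dominates the maximum over $\{\theta_i\}$, which dominates the prior average. Combined with the inclusion from the previous step,
$$\sup_{\theta \in \Omega} \mathbb{P}_{\theta}\!\left(d^2(\hat\theta(X), \theta) \geq \tfrac{\epsilon^2}{4}\right) \;\geq\; \frac{1}{N} \sum_{i=1}^{N} \mathbb{P}_{\theta_i}(\Psi(X) \neq i) \;=\; \mathbb{P}(\Psi(X) \neq \Theta),$$
so it suffices to lower bound the right-hand side.

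Finally, I would apply the classical Fano inequality
$$\mathbb{P}(\Psi(X) \neq \Theta) \;\geq\; 1 - \frac{I(\Theta; X) + \log 2}{\log N},$$
and upper bound the mutual information by the KL diameter $d_{KL}(T)$. Writing $I(\Theta; X) = \frac{1}{N}\sum_i D(\mathbb{P}_{\theta_i} \| \bar{\mathbb{P}})$ with the mixture $\bar{\mathbb{P}} = \frac{1}{N}\sum_j \mathbb{P}_{\theta_j}$, convexity of $D(p\|\cdot)$ in its second slot (Jensen applied to $-\log$) yields $D(\mathbb{P}_{\theta_i}\|\bar{\mathbb{P}}) \leq \frac{1}{N}\sum_j D(\mathbb{P}_{\theta_i}\|\mathbb{P}_{\theta_j}) \leq d_{KL}(T)$, so $I(\Theta; X) \leq d_{KL}(T)$. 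Substituting and using $\log 2 \leq 1$ produces the stated bound.

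The proof is assembled from textbook pieces, so the difficulty is mostly bookkeeping rather than conceptual. The two things to get right are: (i) requiring an $\epsilon$-separated (rather than $\epsilon/2$-separated) packing, so that the triangle inequality produces a unique winner in the hypothesis test and the error criterion is $d^2 \geq \epsilon^2/4$; and (ii) applying convexity of relative entropy in the correct (second) argument in order to dominate the mutual information by $d_{KL}(T)$, rather than by only the average pairwise KL.
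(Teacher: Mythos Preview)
The paper does not supply its own proof of this lemma; it is quoted verbatim as ``the following version of Fano's lemma is found in \cite{Yu:1997}'' and then used as a black box. Your proposal is a correct and complete reconstruction of the standard Yu-style argument (packing reduction to multiple hypothesis testing, classical Fano, and the convexity bound $I(\Theta;X)\le d_{KL}(T)$), so there is nothing to compare against---you have simply filled in what the paper cites.
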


The following lemma gives us a way to easily upper bound the KL divergence between the distributions induced by two different graphons. 

\begin{lem}\label{graphonKL}
For any graphons $\frac{1}{2}\le W,W'\le \frac{3}{4}$, we have $$D(G_n(W)\|G_n(W'))\le 8n^2\|W-W'\|_{2}^2.$$
\end{lem}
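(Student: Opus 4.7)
The plan is to move from the graph distribution to a joint distribution over labels and graphs, where conditional independence makes the KL divergence easy to compute.

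First I would let $\tilde P_W$ denote the joint distribution of $(\ell_1,\ldots,\ell_n, G)$, where the $\ell_i$ are drawn uniformly and independently from $[0,1]$ and then each edge $(i,j)$ is included independently with probability $W(\ell_i,\ell_j)$. The graph marginal of $\tilde P_W$ is exactly $G_n(W)$, so by the data processing inequality,
$$D(G_n(W)\,\|\,G_n(W'))\;\le\;D(\tilde P_W\,\|\,\tilde P_{W'}).$$
Since the label component is identical under both joint distributions, the chain rule for KL divergence gives
$$D(\tilde P_W\,\|\,\tilde P_{W'})\;=\;\underset{\ell_1,\ldots,\ell_n}{\mathbb{E}}\Bigg[\sum_{i<j}D\bigl(\mathrm{Bern}(W(\ell_i,\ell_j))\,\|\,\mathrm{Bern}(W'(\ell_i,\ell_j))\bigr)\Bigg].$$

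Next I would bound each Bernoulli KL divergence pointwise. For $p,q\in[1/2,3/4]$, the standard second-order expansion yields
$$D(\mathrm{Bern}(p)\,\|\,\mathrm{Bern}(q))\;\le\;\frac{(p-q)^2}{q(1-q)}.$$
Since $q(1-q)\ge 3/16$ on $[1/2,3/4]$, each summand is at most $\tfrac{16}{3}(W(\ell_i,\ell_j)-W'(\ell_i,\ell_j))^2$. This is the one step that needs verification: writing $u=(p-q)/q$ and $v=-(p-q)/(1-q)$, the identity $pu+(1-p)v=(p-q)^2/(q(1-q))$ combined with $\log(1+t)\le t-t^2/2+\cdots$ (and the elementary fact that the higher order terms contribute a nonpositive correction when both $p,q$ stay in a bounded interval) gives the required inequality.

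Finally I would integrate. For each pair $i<j$ with $i\ne j$, the labels $\ell_i,\ell_j$ are independent and uniform, so
$$\underset{\ell_i,\ell_j}{\mathbb{E}}\bigl[(W(\ell_i,\ell_j)-W'(\ell_i,\ell_j))^2\bigr]\;=\;\|W-W'\|_2^2.$$
Summing over the at most $n^2/2$ pairs,
$$D(G_n(W)\,\|\,G_n(W'))\;\le\;\frac{16}{3}\cdot\frac{n^2}{2}\,\|W-W'\|_2^2\;\le\;8n^2\|W-W'\|_2^2,$$
which is the claimed inequality. The only delicate point is the Bernoulli estimate, where the hypothesis $W,W'\in[1/2,3/4]$ is used to avoid the blow-up of $1/(q(1-q))$ near the boundary; everything else is bookkeeping.
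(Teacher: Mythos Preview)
Your proof is correct and follows essentially the same route as the paper: pass to the joint law over labels and edges (the paper does this via the log-sum inequality, you via data processing plus the chain rule---these are equivalent), bound the conditional KL as a sum of Bernoulli divergences, and integrate. One small remark: your Bernoulli estimate $D(\mathrm{Bern}(p)\|\mathrm{Bern}(q))\le (p-q)^2/(q(1-q))$ follows immediately from the first-order inequality $\log(1+t)\le t$ applied to your $u$ and $v$ (this is just the $\chi^2$ bound on KL), so the appeal to higher-order terms is unnecessary; the paper simply cites \cite[Proposition~4.2]{Gao:2014} for the analogous step.
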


\begin{proof}
Let $T$ be a variable denoting the choice of labels, so $$\mathbb{P}_{G_n(W)}(G) = \int_{\ell\in[0,1]^n} \mathbb{P}_T(\ell)\mathbb{P}_{G_n(W)}(G|T=\ell)d\ell.$$ Now,
\begin{align*}
D(G_n(W)\|G_n(W')) &= \sum_{G\in G_n} \mathbb{P}_{G_n(W)}(G)\ln\left(\frac{\mathbb{P}_{G_n(W)}(G)}{\mathbb{P}_{G_n(W')}(G)}\right)\\
&\le \sum_{G\in G_n} \int_{\ell\in[1,0]^n} \mathbb{P}_T(\ell)\mathbb{P}_{G_n(W)}(G|T=\ell)\ln\left(\frac{\mathbb{P}_{G_n(W)}(G|T=\ell)}{\mathbb{P}_{G_n(W')}(G|T=\ell)}\right)d\ell\\
&= \int_{\ell\in[0,1]^n} \mathbb{P}_T(\ell)D(\mathbb{P}_{G_n(W)}(\cdot|T=\ell)\|\mathbb{P}_{G_n(W')}(\cdot|T=\ell)d\ell,
\end{align*}
where the inequality follows from the log-sum inequality. Now, the probability density function of T is the constant function 1 so it follows from \citet[Proposition 4.2]{Gao:2014} that 
\begin{align*}
D(G_n(W)\|G_n(W')) &\le 8\int_{\ell\in[0,1]^n}\sum_{i,j=1}^n (W({\ell_i},{\ell_j})- W'({\ell_i},{\ell_j}))^2d\ell\\
&=8\sum_{i,j=1}^n \int_{\ell\in[0,1]^n} (W(\ell_i,\ell_j)- W'(\ell_i,\ell_j))^2 d\ell\\
&\le 8n^2\int_{[0,1]^2} (W(x,y)- W'(x,y))^2dxdy\\
&= 8n^2\|W-W'\|_2^2
\end{align*}
\end{proof}

Recall that we are aiming to define a large set of $k$-block matrices whose KL diameter can be upper bounded and packing number with respect to $\delta_2$ (with $\epsilon=\min(\rho, \sqrt{\frac{\rho k^2}{n^2}})$) can be lower bounded. The following lemma shows that there exists a large set of matrices who are pairwise far in Hamming distance, even after every possible permutation of the rows and columns. We will use this in the proof of Theorem \ref{main} to define a large class of $k$-block graphons who are pairwise far in the $\doublehat{\delta_2}$ metric and hence the $\delta_2$ metric. This gives us a bound on packing number. 

\begin{lem}\label{largeset}
There exists a set $S$ of symmetric $k\times k$ binary matrices such that $|S| = 2^{\Omega(k^2)}$ and if $B, B'\in S$ and $\sigma, \tau\in\mathcal{S}_k$ then Ham$(B_{\sigma, \tau}, B')= \Omega(k^2)$.
\end{lem}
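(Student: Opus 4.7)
The plan is to use the probabilistic method. Sample $N = 2^{c_0 k^2}$ symmetric $k \times k$ binary matrices $B_1, \dots, B_N$ independently, where the entries of each $B_a$ on and above the diagonal are i.i.d.\ Bernoulli$(1/2)$ and $B_a$ is extended symmetrically below the diagonal. For each ordered pair $a \ne b$ and each $\sigma, \tau \in \mathcal{S}_k$, let $E_{a,b,\sigma,\tau}$ be the bad event that $\text{Ham}((B_a)_{\sigma,\tau}, B_b) < c k^2$ for a small absolute constant $c > 0$ to be chosen. The goal is to show that for $c$ and $c_0$ small enough, $\Pr[\bigcup_{a,b,\sigma,\tau} E_{a,b,\sigma,\tau}] < 1$, so that there exists a realization in which $S := \{B_1, \dots, B_N\}$ is a valid packing of size $2^{\Omega(k^2)}$ (the statement being read as quantifying over distinct pairs, since any $B=B'$ with $\sigma=\tau=\mathrm{id}$ forces Hamming distance $0$).

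To estimate $\Pr[E_{a,b,\sigma,\tau}]$, I would condition on $B_a$ so that $M := (B_a)_{\sigma,\tau}$ is deterministic (though generally not symmetric) and view $\text{Ham}(M, B_b)$ as a function of the $\binom{k+1}{2}$ independent random variables $\{(B_b)_{ij} : i \le j\}$. Grouping by unordered pair $\{i,j\}$, the contribution from each off-diagonal pair is $\mathbf{1}[M_{ij} \ne (B_b)_{ij}] + \mathbf{1}[M_{ji} \ne (B_b)_{ij}]$, which has mean exactly $1$ whether or not $M_{ij} = M_{ji}$, while each diagonal entry contributes mean $1/2$. Hence $\mathbb{E}[\text{Ham}(M, B_b)] = \binom{k}{2} + \tfrac{k}{2} = k^2/2$. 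Flipping one off-diagonal entry of $B_b$ changes the Hamming distance by at most $2$ and a diagonal one by at most $1$, so $\sum_i c_i^2 = O(k^2)$ in McDiarmid's inequality, giving
$$\Pr\bigl[\text{Ham}(M, B_b) < k^2/4\bigr] \le \exp\bigl(-\Omega(k^2)\bigr)$$
uniformly over $B_a, \sigma, \tau$.

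Finally, a union bound over the at most $N^2 (k!)^2 \le 2^{2 c_0 k^2 + O(k \log k)}$ bad events gives a total failure probability of at most $2^{2 c_0 k^2 + O(k \log k)} \cdot \exp(-\Omega(k^2))$, which is strictly less than $1$ when $c_0$ is chosen as a sufficiently small absolute constant and $k$ is large enough (small $k$ being handled by taking $|S|=1$). Thus a valid realization exists, proving the lemma with $c = 1/(4k^2) \cdot k^2/4 = 1/16$ or similar. The main obstacle is that the outer union bound must absorb the extra $(k!)^2$ factor introduced by allowing row and column permutations to be chosen independently; what saves the argument is that symmetry of $B_b$ only inflates McDiarmid's bounded-difference constants by a factor of two, so the $\Omega(k^2)$ concentration exponent still comfortably dominates the $2^{O(k \log k)}$ permutation count.
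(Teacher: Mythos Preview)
Your proposal is correct and follows essentially the same route as the paper: the probabilistic method with i.i.d.\ random symmetric binary matrices, a concentration bound on $\text{Ham}((B_a)_{\sigma,\tau},B_b)$ for fixed $\sigma,\tau$, and a union bound over the $N^2(k!)^2$ bad events. The only real difference is the concentration step---the paper observes that, conditional on $B_a$, the agreement indicators $X_{ij}=\mathbf{1}[(B_a)_{\sigma(i),\tau(j)}=(B_b)_{ij}]$ for $i\le j$ are i.i.d.\ Bernoulli$(1/2)$ (since each depends on a distinct independent entry of $B_b$) and applies a Chernoff bound directly, whereas you go through McDiarmid; both yield the same $\exp(-\Omega(k^2))$ tail and the argument is otherwise identical.
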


\begin{proof}
Consider two randomly chosen symmetric binary matrices $B, B'$ and permutations $\sigma$ and $\tau$. For $i\le j$, let $X_{ij}=1$ if $B_{\sigma(i), \tau(j)}=B'_{i,j}$ and 0 otherwise so $X_{ij}$ is a Bernoulli random variable with $\mathbb{E}[X_{ij}]=\frac{1}{2}$. Thus, by a Chernoff bound, $$\mathbb{P}\left(\text{Ham}(B_{\sigma, \tau}, B')<\frac{k^2}{6}\right)=\mathbb{P}\left(\sum_{i\le j}X_{ij}\le \frac{k^2}{6}\right)\le e^{\frac{-2\left(\frac{k^2}{6}-\frac{1}{2}\binom{k}{2}\right)^2}{\binom{k}{2}}}.$$
Therefore, for randomly chosen $B, B'$, $$\mathbb{P}\left(\exists\sigma, \tau \text{ s.t. Ham}(B_{\sigma, \tau}, B')<\frac{k^2}{6}\right)\le e^{\frac{-2\left(\frac{k^2}{6}-\frac{1}{2}\binom{k}{2}\right)^2}{\binom{k}{2}}}(k!)^2 = 2^{-\Omega(k^2)}.$$ 
For a constant $c>0$, consider the process that selects $2^{ck^2}$ binary matrices $\{B_i\}_{i}$ uniformly at random uniformly at random. The probability that all pairs are at Hamming distance at least $k^2/6$ is at least $1-2^{2ck^2}2^{-\Omega(k^2)}$. Selecting $c$ sufficiently small, we get that at least one such set $S$ exists. 
\end{proof} 

We are not aware of an explicit construction of a large family of matrices that are far apart in $\doublehat{\delta}_2$ metric; we leave such a construction as an open problem.

We now proceed to the proof of Theorem \ref{main}. We will use Lemma \ref{largeset} to define a set $T$ with packing number $2^{\Omega(k^2)}$. The elements of $T$ are all close in $\|\cdot\|_{\infty}$ norm, so using Lemma \ref{graphonKL} we get a bound on the KL diameter. We then directly apply these bounds to Fano's lemma.

\begin{thm}
For any positive integer $k\le n$ and $0< \rho\le 1$, 
$$\inf_{\hat{W}}\sup_{W}\underset{G\sim G_n(W)}{\mathbb{E}}\left[\delta_2(\hat{W}(G), W)\right]\ge \Omega\left(\min\left(\rho, \;\sqrt{\frac{\rho k^2}{n^2}}\right)\right).$$
where $\inf_{\hat{W}}$ is the infimum over all estimators $\hat{W}:G_n\to\mathcal{G}$ and $\sup_W$ is the supremum over all $k$-block, $\rho$-bounded graphons.
\end{thm}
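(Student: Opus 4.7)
The plan is to apply Fano's inequality to a set $T$ of $2^{\Omega(k^2)}$ pairwise well-separated $k$-block, $\rho$-bounded graphons built from Lemma~\ref{largeset}. Set $c = \min\{1,\, k/(n\sqrt{\rho})\}$, and for each $B$ in the combinatorial family $S$ of Lemma~\ref{largeset} define the block matrix $M_B = \tfrac{\rho}{2}\mathbf{1} + \tfrac{\rho c}{4}\,B$, so that $W[M_B]$ takes values in $[\rho/2,\, 3\rho/4] \subset [0,\rho]$ and is therefore $\rho$-bounded. Writing $W_B := W[M_B]$ and $T = \{W_B : B \in S\}$, Lemma~\ref{lowerbounddelta2} combined with the Hamming guarantee of Lemma~\ref{largeset} gives
$$\delta_2(W_B, W_{B'}) \;\ge\; \doublehat{\delta}_2(M_B, M_{B'}) \;=\; \tfrac{\rho c}{4}\,\doublehat{\delta}_2(B, B') \;=\; \Omega(\rho c),$$
so $T$ is an $\epsilon$-packing in $\delta_2$ of size $2^{\Omega(k^2)}$ for $\epsilon = \Theta(\rho c) = \Theta\bigl(\min(\rho,\, \sqrt{\rho k^2/n^2})\bigr)$, exactly the target scale.

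For the KL diameter of $T$ I would repeat the argument of Lemma~\ref{graphonKL} but replace the flat Bernoulli KL bound by the sharper pointwise estimate $D(\mathrm{Bern}(p)\|\mathrm{Bern}(q)) = O\bigl((p-q)^2/(q(1-q))\bigr)$, which is legitimate here because every $W_B$ is bounded away from $0$ and $1$ by $\Omega(\rho)$. Integrating over the vertex labels as in the proof of Lemma~\ref{graphonKL} then yields
$$D(G_n(W_B)\,\|\,G_n(W_{B'})) \;\le\; O\!\left(\tfrac{n^2}{\rho}\|W_B - W_{B'}\|_2^2\right) \;=\; O(n^2 \rho c^2) \;=\; O(k^2),$$
where I use $\|W_B - W_{B'}\|_2 \le \rho c/4$ together with $c^2 \le k^2/(n^2 \rho)$ (valid in both regimes of the defining $\min$). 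Choosing the constant hidden in $|S| = 2^{\Omega(k^2)}$ large enough that $\log|S|$ exceeds twice this KL bound, Fano's inequality gives $\mathbb{P}(\delta_2^2(\hat W, W) \ge \epsilon^2/4) \ge \Omega(1)$ uniformly in $\hat W$, and the claimed expectation lower bound $\mathbb{E}[\delta_2(\hat W, W)] = \Omega(\epsilon)$ follows from $\mathbb{E}[X] \ge t\,\mathbb{P}(X\ge t)$ applied with $X = \delta_2(\hat W, W)$ and $t = \epsilon/2$.

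The main obstacle is the KL step. Lemma~\ref{graphonKL} as stated applies only to graphons valued in $[1/2, 3/4]$ and delivers $D \le 8n^2\|W-W'\|_2^2$, which is too weak by a factor of $\rho$ in the sparse regime. I would need to prove the generalization sketched above---essentially the same log-sum manipulation used in Lemma~\ref{graphonKL}, but combined with the variance-aware pointwise Bernoulli KL estimate---so as to replace the absolute constant $8$ by $O(1/\rho)$ whenever $W, W'$ are bounded below by $\Omega(\rho)$. The combinatorial side of the argument (producing a large family with pairwise $\doublehat{\delta}_2 = \Omega(1)$) is supplied entirely by Lemma~\ref{largeset} together with Lemma~\ref{lowerbounddelta2}, so once the refined KL bound is in place the Fano calculation is mechanical. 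A minor caveat is that the Fano constants require $k$ larger than some absolute threshold; the small-$k$ regime is anyway absorbed by the other summands aggregated in Theorem~\ref{main}.
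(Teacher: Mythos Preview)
Your approach is essentially the paper's: build the packing $T$ from the binary matrices of Lemma~\ref{largeset}, lower-bound the $\delta_2$ separation via Lemma~\ref{lowerbounddelta2}, bound the KL diameter, and apply Fano. Your diagnosis of the KL step is also correct and in fact more careful than the paper itself: Lemma~\ref{graphonKL} as stated only covers graphons valued in $[1/2,3/4]$, and the paper invokes it for graphons valued near $\rho/2$. The variance-aware Bernoulli estimate $D(\mathrm{Bern}(p)\|\mathrm{Bern}(q))\le O\bigl((p-q)^2/(q(1-q))\bigr)$ that you propose, combined with $q(1-q)\ge\Omega(\rho)$, yields the needed $D(G_n(W_B)\|G_n(W_{B'}))=O(k^2)$ and is what makes the argument rigorous in the sparse regime.

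There is one slip in your Fano step. You end up with $d_{KL}(T)=O(k^2)$ and $\log|T|=\Omega(k^2)$ and propose to close the gap by ``choosing the constant hidden in $|S|=2^{\Omega(k^2)}$ large enough.'' That constant is not a free parameter; it is fixed once and for all by the proof of Lemma~\ref{largeset}, and nothing guarantees it dominates the constant in your KL bound. The paper handles this by inserting an additional small absolute constant into the perturbation amplitude, writing $Q_B=\rho\bigl[\tfrac12\mathbf{1}+c\,\eta(2B-\mathbf{1})\bigr]$ with $\eta=\min(1,k/(n\sqrt\rho))$ and $c>0$ chosen at the end. This shrinks $d_{KL}(T)$ by $c^2$ while shrinking the $\delta_2$ separation only by $c$, so taking $c$ small drives the Fano ratio below $1$. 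In your construction, replacing the fixed $\tfrac14$ by such a tunable constant repairs the argument.
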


\begin{proof}

Let $S$ be a set satisfying the conditions of Lemma \ref{largeset} and let $\eta = \min(1, \frac{k}{n\sqrt{\rho}})$. For $B\in S$, define $$Q_B = \rho\left[\frac{1}{2}\mathbf{1}+c\eta(2B-\mathbf{1})\right],$$ where $\mathbf{1}$ is the all 1's matrix and $c$ is some constant that we will choose later. That is, $(Q_B)_{ij}=\rho[\frac{1}{2}+c\eta]$ if $B_{ij}=1$ and $(Q_B)_{ij} = \rho[\frac{1}{2}-c\eta]$ if $B_{ij}=0$. Let $T= \left\{W[Q_B] \:|\: B\in S\right\}$ then using Lemma \ref{graphonKL} we conclude that for all $W, W'\in T$, we have $$D(G_n(W)\|G_n(W')) \le 8n^2(2c\rho\eta)^2 \le 32c^2k^2\rho$$ so $d_{KL}(T)\le O(c^2k^2\rho)$. 

Let $B, B'\in S$ and suppose $\sigma, \tau\in\mathcal{S}_k$. Then by construction, $$\|({W[Q_B]})_{\sigma, \tau}-W[Q_{B'}]\|_2^2 \ge \frac{1}{k^2}\text{Ham}(B_{\sigma, \tau}, B')(2\rho c\eta)^2 = \Omega(c^2\rho^2\eta^2).$$ Thus by Corollary \ref{deltahathat}, $$\delta_2(W[Q_B], W[Q_{B'}])\ge\doublehat{\delta_2}(W[Q_B], W[Q_{B'}])\ge \Omega(c\rho\eta).$$ Therefore, there exists $D>0$ such that if $\epsilon = D\rho c\eta = D\min\left(c\rho, \frac{ck\sqrt{\rho}}{n}\right)$, we have $\log\mathcal{M}(\epsilon, T, \delta_2) = \Omega(k^2)$. Then, Fano's lemma implies $$\inf_{\hat{W}}\sup_{W} \text{Pr}\left(\delta_2(\hat{W}, W)\ge \frac{\epsilon}{2}\right)\ge 1-\frac{O(c^2k^2\rho)+1}{\Omega(k^2)}.$$ We can choose $c$ small enough that the right hand side is larger than a fixed constant for all $k$ and $n$. Therefore, using Markov's inequality we have $$\inf_{\hat{W}}\sup_{W}\mathbb{E}\left[\delta_2(\hat{W}, W)\right]\ge \Omega\left(\epsilon\right) = \Omega\left(\min\left(\rho, \sqrt{\rho\frac{k^2}{n^2}}\right)\right).$$
\end{proof}

\begin{proof}[Proof of Proposition \ref{covering}]
During the course of the proof of Theorem \ref{main} we construct $2^{\Omega(k^2)}$ graphons in $\mathcal{W}_{\rho}$ that are pairwise at least $\Omega(\rho c\eta)$ apart in the $\delta_2$ distance for any $c>0$ such that $|c\eta|\le\frac{1}{2}$. Therefore, for some $C>0$, the $C\rho$-packing number of $\mathcal{W}_{\rho}$ is at least $2^{\Omega(k^2)}$.
\end{proof}

\section{Appendix}\label{extra}

We show here how to derive the lower bound in \eqref{Neemanlower} from the results of \citet{Neeman:2014}.

\begin{prop}\label{Neemanbit}
For any positive integer $2\le k\le n$ and $0< \rho\le 1$, 
$$\inf_{\hat{W}}\sup_{W}\underset{G\sim G_n(W)}{\mathbb{E}}[\delta_2(\hat{W}, W)]\ge\Omega\left(\min\left(\rho,  \sqrt{\frac{\rho \log (\min(k,\rho n+2))}{n}}\right)\right)$$
where $\inf_{\hat{W}}$ is the infimum over all estimators $\hat{W}:G_n\to\mathcal{G}$ and $\sup_W$ is the supremum over all $k$-block, $\rho$-bounded graphons.
\end{prop}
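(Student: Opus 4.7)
The plan is to prove Proposition \ref{Neemanbit} via a reduction from testing to estimation, invoking the indistinguishability results of Neeman and Netrapalli on the community detection side. The $\min$ with $\rho$ comes immediately from the trivial estimator $\hat W \equiv 0$, which achieves $\delta_2$-error $O(\rho)$ against any $\rho$-bounded graphon, so all the work lies in establishing the other branch of the minimum.

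For the $\sqrt{\rho \log(\rho n + 2)/n}$ regime, I would use a two-point argument. Take $W_0 \equiv \rho/2$ and a symmetric $2$-block perturbation $W_1$ with within-block value $(\rho/2)(1+\lambda)$ and cross-block value $(\rho/2)(1-\lambda)$, so both graphons lie in $[0,\rho]$. Because $W_0$ is constant, the measure-preserving infimum in the definition of $\delta_2$ is trivial, giving $\delta_2(W_0, W_1) = \|W_1 - W_0\|_2 = \Theta(\rho \lambda)$. Neeman--Netrapalli show that in the sparse regime, the distributions $G_n(W_0)$ and $G_n(W_1)$ are mutually contiguous (equivalently, have total variation bounded away from $1$) once $\lambda$ falls below a threshold of order $\sqrt{\log(\rho n + 2)/(\rho n)}$. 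Combined with Le Cam's two-point method, this yields $\inf_{\hat W}\sup_W \mathbb{E}\,\delta_2(\hat W, W) = \Omega(\sqrt{\rho \log(\rho n + 2)/n})$.

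For the $\sqrt{\rho \log k/n}$ term (relevant when $k \leq \rho n + 2$), I would replace the two-point test with a many-alternative Fano-style construction using up to $k$ planted-community graphons. One option is to invoke the $k$-community version of Neeman--Netrapalli's contiguity bound directly; a more self-contained route is to bound the pairwise KL divergences via Lemma \ref{graphonKL} and lower-bound the $\delta_2$ packing by first passing to the combinatorial metric $\doublehat{\delta}_2$ via Lemma \ref{lowerbounddelta2}. Balancing the perturbation scale so that the KL diameter of the family is $O(\log k)$ produces the desired $\Omega(\sqrt{\rho \log k/n})$ bound. Taking the stronger of the two regimes and capping at the trivial $\rho$-bound gives the claimed $\Omega(\min(\rho,\, \sqrt{\rho \log \min(k, \rho n + 2)/n}))$.

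The principal obstacle is that Neeman--Netrapalli's results are stated in terms of distinguishability thresholds for community detection rather than as estimation lower bounds on $\delta_2(W,\hat W)$, so their statements must be translated into explicit bounds on $\mathrm{TV}(G_n(W_0), G_n(W_1))$ with the correct $\rho$-, $n$-, and $k$-scaling. A secondary difficulty is the very sparse regime $\rho n = O(1)$, where $\log(\rho n + 2)$ is merely a constant and the result must degrade gracefully into the trivial $\Omega(\min(\rho,\sqrt{\rho/n}))$ bound rather than becoming vacuous; this forces the two-point construction to use a perturbation $\lambda$ of constant order, so one must separately verify that the two distributions still have bounded KL divergence in that corner case.
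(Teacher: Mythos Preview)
There is a genuine gap in your two-point argument for the $\sqrt{\rho \log(\rho n+2)/n}$ term. For the $2$-block planted partition versus Erd\H{o}s--R\'enyi, the Neeman--Netrapalli contiguity threshold is the Kesten--Stigum bound $\lambda^2 d \lesssim 1$, i.e.\ $\lambda \lesssim 1/\sqrt{\rho n}$, \emph{not} $\lambda \lesssim \sqrt{\log(\rho n)/(\rho n)}$. Above the Kesten--Stigum threshold the two models are in fact distinguishable (Mossel--Neeman--Sly, Massouli\'e), so contiguity cannot extend to the larger $\lambda$ you need. Consequently your $2$-block Le Cam argument yields only $\Omega(\rho\lambda)=\Omega(\sqrt{\rho/n})$, with no logarithmic gain. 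The $\log(\rho n)$ factor in the target bound does not come from a two-block comparison at all; it comes from using $i$ blocks with $i$ chosen of order roughly $\rho n/\log(\rho n)$, because the Neeman--Netrapalli threshold for an $i$-block model is $d\lambda^2(i-1)/2 \le \log(i-1)$ and the $\delta_2$ separation from Erd\H{o}s--R\'enyi scales like $|p-q|/\sqrt{i}$. That is precisely what the paper does: it first proves, for each $i\le k$, a lower bound $\Omega\bigl(\min(\rho/\sqrt{i},\sqrt{\rho\log i/n})\bigr)$ via an $i$-block planted partition versus Erd\H{o}s--R\'enyi, and then maximizes over $i\le k$; the optimum at $i\log i\asymp \rho n$ produces the $\log(\rho n)$ term.

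Two smaller points. First, the trivial estimator $\hat W\equiv 0$ gives an \emph{upper} bound of $O(\rho)$ on the minimax risk; it does not by itself establish the $\Omega(\rho)$ branch of the lower bound. In the regime where the square-root term exceeds $\rho$ (roughly $\rho n = O(1)$) you still need a hard instance, which the paper obtains from the same lemma at $i=2$. Second, your fallback route for the $\sqrt{\rho\log k/n}$ term---Fano with KL controlled by Lemma~\ref{graphonKL} and packing via $\doublehat{\delta}_2$---is unlikely to succeed as stated: the $\log k/n$ rate in the matrix-estimation literature comes from uncertainty in the \emph{labels}, which is precisely what $\delta_2$ quotients out, so a packing of graphons in $\doublehat{\delta}_2$ at the right scale with KL diameter $O(\log k)$ is not available from Lemma~\ref{largeset}. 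The contiguity route (your option (a)) is the one that works, and once you carry it out for general $i$ and optimize, you have recovered the paper's argument.
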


\begin{lem}\label{Neemanlem}
For any positive integer $k\le n$ and $0< \rho\le 1$, 
$$\inf_{\hat{W}}\sup_{W}\underset{G\sim G_n(W)}{\mathbb{E}}\left[\delta_2(\hat{W}(G), W)\right]\ge \Omega\left(\min\left(\frac{\rho}{\sqrt{k}}, \;\sqrt{\frac{\rho \log k}{n}}\right)\right).$$
where $\inf_{\hat{W}}$ is the infimum over all estimators $\hat{W}:G_n\to\mathcal{G}$ and $\sup_W$ is the supremum over all $k$-block, $\rho$-bounded graphons.
\end{lem}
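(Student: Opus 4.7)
The plan is to reduce graphon estimation to a two-point hypothesis test between an Erd\"os-R\'enyi graphon and a symmetric $k$-block graphon of the same density, and then invoke the contiguity theorem of \citet{Neeman:2014}. Fix $p = \rho/2$, let $W_0 \equiv p$ be the constant graphon, and for $\alpha \in [0,1]$ let $W_\alpha$ be the balanced $k$-block graphon taking value $p(1+\alpha)$ on each diagonal block $I_i\times I_i$ and $p(1-\alpha/(k-1))$ on every off-diagonal block. Both graphons are $\rho$-bounded and share the same average value $p$. Since $W_0$ is constant, $\delta_2(W_0,W_\alpha)=\|W_0-W_\alpha\|_2$, and a direct computation gives
\[\|W_0-W_\alpha\|_2^2=\frac{1}{k}(p\alpha)^2+\frac{k-1}{k}\Big(\frac{p\alpha}{k-1}\Big)^2=\frac{p^2\alpha^2}{k-1},\]
so $\delta_2(W_0,W_\alpha)=\Theta(\rho\alpha/\sqrt{k})$.

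Next, \citet{Neeman:2014} supplies a universal constant $c>0$ such that whenever $n\rho\alpha^2\le ck\log k$, the distributions $G_n(W_0)$ and $G_n(W_\alpha)$ are mutually contiguous; in particular, their total variation distance is bounded away from $1$, so no test distinguishes them with success probability exceeding some $1-\gamma$ for an absolute constant $\gamma>0$. The standard testing-to-estimation reduction then applies: if an estimator $\hat W$ satisfied $\delta_2(\hat W,W)<\delta_2(W_0,W_\alpha)/3$ with probability greater than $1-\gamma/2$ under both hypotheses, the test ``output $W_0$ iff $\delta_2(\hat W,W_0)\le\delta_2(\hat W,W_\alpha)$'' would succeed with probability $>1-\gamma$, a contradiction. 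Hence $\sup_W\mathbb{P}\!\left(\delta_2(\hat W,W)\ge\delta_2(W_0,W_\alpha)/3\right)\ge\gamma/2$, and by Markov $\sup_W\mathbb{E}[\delta_2(\hat W,W)]=\Omega(\rho\alpha/\sqrt{k})$.

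To finish, optimize over $\alpha\in[0,1]$: if $ck\log k\ge n\rho$, take $\alpha=1$ to obtain $\Omega(\rho/\sqrt{k})$; otherwise set $\alpha=\sqrt{ck\log k/(n\rho)}$ to obtain $\Omega(\sqrt{\rho\log k/n})$. Combining the two cases yields the claimed $\Omega(\min(\rho/\sqrt{k},\sqrt{\rho\log k/n}))$. The main obstacle is invoking \citet{Neeman:2014} in precisely the right form: their theorem is phrased for SBMs parameterized by intra/inter probabilities $(a/n,b/n)$ via a Kesten--Stigum-style signal-to-noise ratio, so one must translate carefully into our $(\rho,\alpha,k)$ normalization and verify that the contiguity regime they supply is $n\rho\alpha^2\lesssim k\log k$ (with the correct $\log k$ factor, not merely a Kesten--Stigum constant). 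The remaining ingredients---the $\delta_2$ calculation against a constant graphon, and the testing-to-estimation reduction---are routine.
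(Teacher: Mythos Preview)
Your proposal is correct and follows essentially the same approach as the paper: a two-point comparison between the Erd\H{o}s--R\'enyi graphon and a balanced symmetric $k$-block graphon of the same density, invoking the contiguity result of \citet{Neeman:2014} (the paper cites it in the formulation of Banks et al., whose condition $d\lambda^2(k-1)/2\le\log(k-1)$ is exactly your $n\rho\alpha^2\lesssim k\log k$), together with the $\delta_2$ separation $\Theta(\rho\alpha/\sqrt{k})$ and a testing-to-estimation reduction. The only cosmetic differences are that the paper fixes the gap $\epsilon=p-q$ at the optimal value from the start rather than optimizing over your $\alpha$ at the end, and it phrases the reduction as an asymptotic contradiction (via $o(\cdot)$ and contiguity) rather than via a total-variation bound; these are equivalent packagings of the same argument.
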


\begin{proof} Let $\epsilon = \min\left(\sqrt{\frac{\rho k\log k}{n}}, \rho\right)$, $q=\frac{1}{2}\frac{k-1}{k^2}\frac{n\epsilon^2}{\log(k-1)}$ and $p=\epsilon+q$. Let $W_1$ be a $k$-block graphon with equally sized blocks and edge probabilities between nodes with the same label, $p$, and with different labels, $q$. Let $W_2$ be the Erd\"os-R\'enyi model with the same expected degree, $d$, as $W_1$. Note that $0\le p,q\le O(\rho)$ so $W_1, W_2\in\mathcal{W}_{O(\rho)}$ and $\delta_2\left(W_1, ER\right)\ge \Omega\left(\frac{|p-q|}{\sqrt{k}}\right)=\Omega\left(\frac{\epsilon}{\sqrt{k}}\right).$

Let $(\Omega_n, \mathcal{F}_n)$ be a sequence of measurable spaces, each equipt with two probability measures, $\mathbb{P}_n$ and $\mathbb{Q}_n$. We say $\mathbb{P}_n$ and $\mathbb{Q}_n$ are mutually contiguous if for any sequence of events $A_n$, we have $\lim_{n\to\infty}\mathbb{P}_n(A_n)\to0$ if and only if $\lim_{n\to\infty}\mathbb{Q}_n(A_n)\to0$. Let $\lambda = \frac{n(p-q)}{dk}$. A result of Neeman et al. \cite{Neeman:2014}, as presented by Banks et al. \cite{Banks:2016}, implies that $W_1$ and $W_2$ are mutually contiguous if 
$$\frac{d\lambda^2(k-1)}{2}\le \log(k-1).$$
A short calculation shows that this holds so we have that $W_1$ and $W_2$ are mutually contiguous. Choose $C>0$ such that $\delta_2(W_1, W_2) \ge C\frac{\epsilon}{\sqrt{k}}$. 

Suppose, for sake of contradiction, that there did exist $\hat{W}$ such that $$\sup_{W\in\mathcal{W}_{\rho}}\mathbb{E}_{G\sim G_n(W)}\left[\delta_2(\hat{W}(G), W)\right]\le o\left(\frac{\epsilon}{\sqrt{k}}\right)= o\left(\min\left(\frac{\rho}{\sqrt{k}}, \sqrt{\frac{\rho \log k}{n}}\right)\right).$$ 
Then due to the contiguity of $W_1$ and $W_2$,
$\lim_{n\to\infty}Pr_{G\sim G_n(W_1)}[\delta_2(\hat{W}(G), W_1)\ge \frac{C}{2}\frac{\epsilon}{\sqrt{k}}]\to 0$ and  $\lim_{n\to\infty}Pr_{G\sim G_n(W_1)}[\delta_2(\hat{W}(G), W_2)\ge \frac{C}{2}\frac{\epsilon}{\sqrt{k}}]\to 0$. Therefore, for large enough $n$, there exists a graph $G$ such that $\delta_2(\hat{W}(G), W_1)< \frac{C}{2}\frac{\epsilon}{\sqrt{k}}$ and $\delta_2(\hat{W}(G), W_2)< \frac{C}{2}\frac{\epsilon}{\sqrt{k}}$, which implies that $\delta_2(W_1, W_2)<C\frac{\epsilon}{\sqrt{k}}$, which is a contradiction.
\end{proof}

\begin{proof}[Proof of Proposition \ref{Neemanbit}]
Let $f(i) =\min\left(\frac{\rho}{\sqrt{k}}, \;\sqrt{\frac{\rho \log k}{n}}\right)$. Any $k$-block graphon is also an $i$-block graphon for any $i\le k$ so Lemma \ref{Neemanlem} implies $$\inf_{\hat{W}}\sup_{W}\underset{G\sim G_n(W)}{\mathbb{E}}\left[\delta_2(\hat{W}(G), W)\right]\ge \Omega\left(\max_{i\le k} f(i)\right)\ge \Omega\left(\min\left(\rho, \sqrt{\frac{\rho \log(\min(k, \rho n+2))}{n}}\right)\right).$$
If $\rho\le \frac{4}{n}$ then $$\Omega\left(\max_{i\le k} f(i)\right)\ge\Omega\left(\min\left(\rho, \sqrt{\frac{\rho}{n}}\right)\right)\ge \Omega\left(\min\left(\rho, \sqrt{\frac{\rho \log(\min(k, \rho n+2))}{n}}\right)\right).$$
If $\rho\ge \frac{k\log k}{n}$ then $$\Omega\left(\max_{i\le k} f(i)\right)=\Omega\left(\frac{\rho\log k}{n}\right)\ge \Omega \left(\sqrt{\frac{\rho \log(\min(k, \rho n+2))}{n}}\right).$$
If $\frac{4}{n}\le \rho\le \frac{k\log k}{n}$ then the functions $\frac{\rho}{\sqrt{i}}$ and $\sqrt{\frac{\rho\log k}{n}}$ intersect when $\rho n=i\log i$, which occurs for some $i$ such that $i\ge \sqrt{\rho n}$ so $$\Omega\left(\max_{i\le k} f(i)\right)\ge \Omega\left(\sqrt{\frac{\rho\log((\rho n)^{1/4})}{n}}\right)\ge \Omega \left(\sqrt{\frac{\rho \log(\min(k, \rho n+2))}{n}}\right)$$

\end{proof}


\bibliographystyle{abbrvnat}
\bibliography{bibliography,graphsprivacy,PrivateGraphs}

\end{document}